\newtheorem{corollary}{Corollary}
\newtheorem{theorem}{Theorem}
\newtheorem{definition}{Definition}
\theoremstyle{definition}
\newtheorem{remark}{Remark}
\newtheorem{problem}{Problem}
\title{A hypothesis-testing perspective on the G-normal distribution theory}
\author{Shige Peng\footnote{Institute of Mathematics, Shandong University, Jinan, Shandong 250100, China.} ~and~Quan Zhou\footnote{Department of Statistics, Texas A\&M University, College Station, Texas 77843, U.S.A.}}
\date{}
\DeclareMathOperator*{\Var}{Var}
\newcommand{\pr}{\mathsf{P}}
\newcommand{\E}{\mathsf{E}} 
\newcommand{\bbR}{\mathbb{R}} 
\newcommand{\smax}{\overline{\sigma}}
\newcommand{\smin}{\underline{\sigma}}
\newcommand{\cF}{\mathcal{F}} 
\newcommand{\cH}{\mathcal{H}} 
\newcommand{\cP}{\mathcal{P}} 
\newcommand{\bbE}{\mathbb{E}}
\newcommand{\define}{\coloneqq} 
\newcommand{\ind}[1]{\mathbbm{1}(#1)}
\begin{document}

\maketitle

\begin{abstract}
The G-normal distribution was introduced by~\citet{peng2007g} as the limiting distribution in the central limit theorem for sublinear expectation spaces. Equivalently, it can be interpreted as the solution to a stochastic control problem where we have a sequence of random variables, whose variances can be chosen based on all past information. 
In this note we study the tail behavior of the G-normal distribution through analyzing a nonlinear heat equation. 
Asymptotic results are provided so that the tail ``probabilities" can be easily evaluated with high accuracy. 
This study also has a significant impact on the hypothesis testing  theory for heteroscedastic data; we show that even if the data are generated under the null hypothesis, it is possible to cheat and attain statistical significance by sequentially manipulating the error variances of the observations. 
\end{abstract}

\paragraph*{Keywords}
heteroskedasticity;  nonlinear heat equation;  p-hacking;    sublinear expectation;  tail capacity



\section{Introduction}\label{sec:intro}
The primary goal of this note is to study the asymptotic tail behavior of the G-normal distribution, providing a key result to the theory of sublinear expectation spaces developed by~\cite{peng2008new}. To statisticians, our result can be interpreted from a hypothesis-testing perspective.  Suppose for heteroscedastic observations $X_1, X_2, \dots$, one wants to conduct a statistical test regarding their common mean. Then by manipulating their variances, the experimenter is able to reject the null hypothesis with probability greater than the nominal significance level when the data are actually generated under the null.  
This can be seen as a new type of ``cheating with the data", which in spirit is similar to the well-known ``p-hacking" phenomenon\footnote{The term ``p-hacking" refers to the the phenomenon that researchers may try out different data analysis methods until they obtain a p-value small enough.}~\citep{head2015extent}.

As suggested by its name, G-normal distribution plays a central role in the sublinear expectation theory as normal distribution does in the classical probability theory. Indeed, it is the limiting distribution in the generalized ``central limit theorem" for sublinear expectation spaces. A more detailed review of the G-normal distribution (and sublinear expectation spaces) will be given in Section~\ref{sec:sublinear}.   As noted in~\cite{fang2017limit}, to characterize the tail behavior of the G-normal distribution, equivalently we can consider the following stochastic control problem (see also Theorem~\ref{th:clt} and Definition~\ref{def:gnormal}.)

\begin{problem}\label{prob:control}
Let $\epsilon_1, \epsilon_2, \dots$ be a sequence of i.i.d. random variables such that $\E(\epsilon_i) = 0$, $\E(\epsilon_i^2) = 1$ and $\E(|\epsilon_i|^3) < \infty$, defined on some filtered probability space $(\Omega, \cF, \pr, \{\cF_i\}_{i=0}^\infty)$ where $\{\cF_i\}_{i=0}^\infty$ is the natural filtration generated by $\{\epsilon_i\}_{i=1}^\infty$, i.e. $\cF_i = \sigma( \epsilon_1, \dots, \epsilon_i)$. 
Let $\Sigma(\smin, \smax)$ be the collection of all predictable sequences with respect to $\{\cF_i\}_{i=0}^\infty$ that always take value in $[\smin, \smax]$ where $\smin, \smax$ are given constants ($0 \leq \smin \leq \smax < \infty$.)   
For any $\{\sigma_i\}_{i=1}^n \in \Sigma(\smin, \smax) $, define $X_i = \sigma_i \epsilon_i$ and $\bar{X}_n = (X_1 + \cdots + X_n) / n$. The problem is to compute the following two functions and find the sequences $\{\sigma_i\}_{i=1}^\infty$ that attain the corresponding supremums, 
\begin{equation}\label{eq:tailc}
\begin{aligned}
p_1(c; \smin, \smax) \define &\, \lim\limits_{n \rightarrow \infty} \sup\limits_{ \{\sigma_i\} \in \Sigma(\smin, \smax) }  \E [ \ind{ \sqrt{n} \bar{X}_n > c }   ],  \\
p_2(c; \smin, \smax) \define &\, \lim\limits_{n \rightarrow \infty} \sup\limits_{ \{\sigma_i\} \in \Sigma(\smin, \smax) }  \E  [ \ind{  \sqrt{n} |\bar{X}_n| > c }   ], 
\end{aligned}
\end{equation}
where $c \in [0, \infty)$ and $\mathbbm{1}$ denotes the indicator function.  
\end{problem}

If $\smin = \smax = \sigma$, the observations $X_1, X_2, \dots$ are i.i.d. and thus by the classical central limit theorem, we have $p_2(c) = 2\Phi(-c/\sigma) = 2p_1(c)$ where $\Phi$ denotes the distribution function of the standard normal distribution. When $\smin < \smax$, the functions $p_1$ and $p_2$ are called tail capacities of the G-normal distribution, where ``capacity" can be understood as a generalization of probability. The characterization of $p_1$ and $p_2$ is vital to the understanding of G-normal distribution. 
To evaluate $p_1$ and $p_2$, we need solve a nonlinear heat equation, which is studied in Section~\ref{sec:pval}. It turns out that $p_1$ admits a closed-form expression but $p_2$ does not. The main technical result of this paper is an asymptotic approximation for $p_2$, which is highly accurate and very easy to compute. 
 
Now we explain how Problem~\ref{prob:control} relates to hypothesis testing.  Suppose we observe $X_1, X_2, \dots, $, which are generated by the model given in Problem~\ref{prob:control} and consider the null hypothesis $H_0: \E(X_i) = 0$ for every $i$. When $\smin$ is slightly smaller than $\smax$, both the heteroscedasticity (i.e. the fact that $\Var(X_i)$ is not a constant) and the dependence structure of the observations could be very difficult to detect; if $\{X_i\}_{i=1}^n$ is treated as an i.i.d. sample, the null hypothesis can be tested using the t-statistic, 
\begin{equation}\label{eq:def.T}
T_n(X)  = \dfrac{  \sqrt{n} \bar{X}_n }{ \sqrt{s^2_n} } , \quad s^2_n = \dfrac{1}{n - 1} \sum\limits_{i=1}^n (X_i - \bar{X}_n)^2. 
\end{equation}
For sufficiently large $n$, $T_n(X)$ can be treated as a standard normal variable and the probability of $s^2_n$ being greater than $\smax^2$ quickly decreases to zero.  Hence, for a one-sided test with level $\alpha$, the null hypothesis would be rejected if $ \sqrt{n} \bar{X}_n > \smax  \Phi^{-1}(1 - \alpha)$. (One can also use the $(1-\alpha)$\% quantile of the t-distribution here and our theory will apply equally.) 
Imagine that an experimenter is able to choose any $\{\sigma_i \}_{i=1}^n$ from the set $\Sigma(\smin, \smax)$ (defined in Problem~\ref{prob:control}) and wants to maximize the probability of the event $\{ \sqrt{n} \bar{X}_n > \smax  \Phi^{-1}(1 - \alpha) \}$.
Then, as will be shown in Section~\ref{sec:application}, the asymptotically optimal strategy is to simply choose either $\sigma_i = \smin$ or $\sigma_i = \smax$ depending on whether $X_1 + \cdots + X_{i - 1}$ is greater than $\sqrt{n}  \smax  \Phi^{-1}(1 - \alpha).$ Further, $p_1( \smax  \Phi^{-1}(1 - \alpha) )$ is always strictly greater than $\alpha$ given that $\smax > \smin$. A similar analysis can be conducted for the two-sided test as well. Simulation studies  with unknown $\smin, \smax$  will be provided in Section~\ref{sec:application}. 


We point out that in many applications, it is possible for the experimenter to affect the error variances. For example,  consider an economist planning to survey individuals of different ages to study whether some variable has an effect on personal income. 
 The errors are heteroscedastic because the income of older people tends to have a larger variance. Whether the economist deliberately surveys more younger (or older) people seems unimportant since  age is included in the regression model as a confounding variable.  But the result of this paper implies that this is not true  if the economist decides who to survey next (in terms of age) based on previous observations.

\section{G-normal distribution and Peng's central limit theorem}\label{sec:sublinear} 
The sublinear expectation theory was motivated by capturing the model uncertainty in real-world markets~\citep{artzner1999coherent, chen2002ambiguity} and has found applications in economics, mathematical finance and statistics~\citep{epstein2014ambiguous, yang2018improving, lin2016k}. Concepts such as ``distribution" and ``independence" are redefined for a sublinear expectation space. 
But to make the present note easier to understand, we will present all the results using the language of classical probability theory, except the use of the terms ``G-normal distribution" and ``tail capacity". 

The central limit theorem for sublinear expectation spaces,  first developed by~\cite{peng2008new}, has been formulated in various ways. 
In Theorem~\ref{th:clt} we present the version given in~\citet{rokhlin2015central1}~\citep[see also][]{fang2017limit},  which can be seen as a generalization of the classical central limit theorem to controlled stochastic processes. It is an immediate corollary of Peng's original central limit theorem, but translated into the language of classical probability (see Appendix B).

\begin{theorem}\label{th:clt}
Let $\{\epsilon_i\}_{i=1}^\infty$, $\{\sigma_i\}_{i=1}^n$, $\{X_i\}_{i=1}^n$ and $\Sigma(\smin, \smax)$ be as given in Problem~\ref{prob:control}. 
Then for any Lipschitz function $\varphi$, 
\begin{equation}\label{eq:clt}
\lim\limits_{n \rightarrow \infty} \sup_{ \{\sigma_i\} \in \Sigma(\smin, \smax) }    \E \Big[ \varphi (  \sqrt{n} \bar{X}_n  )  \Big] = u(1, 0; \varphi), 
\end{equation}
where $\{u(t, x; \, \varphi)\colon \,  (t, x) \in [0, \infty) \times \bbR \}$ is the unique viscosity solution to the Cauchy problem, 
\begin{equation}\label{eq:def.G-heat}
u_t = \dfrac{1}{2} \left(  \smax^2 (u_{xx})^+ -  \smin^2 (u_{xx})^- \right),  \quad \quad  u(0, x) =  \varphi(x). 
\end{equation}
In the above expression, $u_t = \partial u/\partial t$, $u_{xx} = \partial^2 u / \partial x^2$, and the superscripts $+$ and $-$ denote the positive and negative parts respectively. 
\end{theorem}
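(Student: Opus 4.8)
The plan is to realise the left-hand side of~\eqref{eq:clt} as the value function of a finite-horizon, discrete-time stochastic control problem and to show that this value function converges, as $n \to \infty$, to the viscosity solution of~\eqref{eq:def.G-heat}. For $m = 0, 1, \dots, n$ and $x \in \bbR$, set
\begin{equation*}
w_n(m, x) \define \sup_{\{\sigma_i\} \in \Sigma(\smin, \smax)} \E \left[ \varphi \left( x + \frac{1}{\sqrt{n}} \sum_{i = n - m + 1}^{n} \sigma_i \epsilon_i \right) \right] ,
\end{equation*}
so that $w_n(0, x) = \varphi(x)$ and $w_n(n, 0)$ is exactly the quantity inside the limit in~\eqref{eq:clt}. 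Since the horizon is finite and $\{\cF_i\}$ is generated by the i.i.d.\ sequence $\{\epsilon_i\}$, a standard dynamic-programming argument — conditioning successively on $\cF_{n - m}$ and $\cF_{n - m - 1}$, using that $\sigma_{n - m}$ is $\cF_{n - m - 1}$-measurable while $\epsilon_{n - m}$ is independent of $\cF_{n - m - 1}$, together with a measurable selection of near-optimal strategies — yields the Bellman recursion
\begin{equation*}
w_n(m + 1, x) = \sup_{\sigma \in [\smin, \smax]} \E \left[ w_n \! \left( m, \, x + \frac{\sigma \epsilon_1}{\sqrt{n}} \right) \right] , \qquad m = 0, \dots, n - 1 .
\end{equation*}

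Next I would identify the continuous-time limit heuristically. Substituting a smooth candidate $w_n(m, x) \approx u(m / n, x)$ into the recursion and Taylor-expanding to third order,
\begin{equation*}
\E \! \left[ u \! \left( \tfrac{m}{n}, \, x + \tfrac{\sigma \epsilon_1}{\sqrt{n}} \right) \right] = u \! \left( \tfrac{m}{n}, x \right) + \frac{\sigma}{\sqrt{n}} u_x \, \E(\epsilon_1) + \frac{\sigma^2}{2 n} u_{xx} \, \E(\epsilon_1^2) + R_n ,
\end{equation*}
the first-order term drops out because $\E(\epsilon_1) = 0$, the coefficient of $n^{-1} u_{xx}$ is $\E(\epsilon_1^2) = 1$, and the remainder obeys $|R_n| \le C \, \E(|\epsilon_1|^3) \, n^{-3/2}$ — this is precisely where the three moment hypotheses on $\epsilon_i$ enter, the third being a Berry--Esseen-type requirement controlling the consistency rate. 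Taking the supremum over $\sigma \in [\smin, \smax]$ converts $\sigma^2 u_{xx}$ into $\smax^2 (u_{xx})^+ - \smin^2 (u_{xx})^-$, and matching against $w_n(m + 1, x) - w_n(m, x) \approx n^{-1} u_t$ produces exactly~\eqref{eq:def.G-heat} together with the initial condition $u(0, x) = \varphi(x)$.

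To make this rigorous I would invoke the Barles--Souganidis monotone-scheme framework. Writing the recursion as $w_n(m + 1, \cdot) = S_n[w_n(m, \cdot)]$, the scheme is (i) \emph{monotone}, being an increasing functional of its argument; (ii) \emph{stable}: since $\varphi$ is Lipschitz, each $w_n(m, \cdot)$ is Lipschitz with the same constant and has linear growth uniformly in $n$ and $m$ (here the bound $\E[(\sqrt{n} \bar{X}_n)^2] \le \smax^2$ and the martingale structure of the partial sums are used); and (iii) \emph{consistent} with~\eqref{eq:def.G-heat}, by the Taylor estimate above, the bound on $R_n$ being uniform over the control. These three properties, together with the comparison principle for~\eqref{eq:def.G-heat} — which holds for this degenerate fully nonlinear parabolic equation and in particular guarantees uniqueness of the viscosity solution — force the relaxed half-limits $\limsup^* w_n(\lfloor n t \rfloor, x)$ and $\liminf_* w_n(\lfloor n t \rfloor, x)$ to coincide with $u(t, x; \varphi)$, giving locally uniform convergence; evaluating at $t = 1$, $x = 0$ yields~\eqref{eq:clt}.

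I expect the main obstacle to be the two technical ingredients buried in the last step. First, one must establish (or cite) the comparison principle for~\eqref{eq:def.G-heat}: it is not a uniformly parabolic equation — it degenerates when $\smin = 0$ and the nonlinearity is only Lipschitz (not $C^1$) in $u_{xx}$ — so the usual doubling-of-variables argument has to be run with some care. Second, the consistency estimate must be genuinely uniform over \emph{all} predictable controls, which is exactly why the $L^3$ control on $\epsilon_1$ cannot be dispensed with. An alternative, shorter route bypasses the PDE analysis: one checks that $\xi \mapsto \lim_n \sup_{\{\sigma_i\}} \E[\xi(\sqrt{n} \bar{X}_n)]$ is the upper expectation of a sequence that is i.i.d.\ in Peng's sense with mean zero and variance uncertainty $[\smin^2, \smax^2]$, whereupon~\eqref{eq:clt} becomes a direct restatement of Peng's central limit theorem for sublinear expectation spaces~\citep{peng2008new}; carrying out this identification is the content of Appendix~B.
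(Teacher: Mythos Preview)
Your main argument---dynamic programming plus Barles--Souganidis convergence of the monotone scheme to the viscosity solution---is correct, but it is not what the paper does. The paper gives no self-contained proof of Theorem~\ref{th:clt}: Remark~\ref{remark:conds} simply refers to \citet{peng2008new, peng2019law, rokhlin2015central1, fang2017limit}, and Appendix~B carries out precisely the ``alternative, shorter route'' you sketch in your final paragraph, namely constructing a sublinear expectation $\bbE$ via~\eqref{eq:def.sub.peng} under which the canonical variables $X_i(\omega)=\omega_i$ are i.i.d.\ (in Peng's sense) with mean zero and variance interval $[\smin^2,\smax^2]$, and then invoking Peng's CLT as a black box. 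Your Bellman-recursion/monotone-scheme approach is essentially what underlies the cited references (in particular \citet{rokhlin2015central1} and the rate papers of Remark~\ref{remark:conds}) and has the merit of being self-contained and of exhibiting exactly where each moment hypothesis on $\epsilon_1$ is used; the paper's route is far shorter but offloads all the analytic work to Peng's theorem, whose own proof rests on PDE estimates of the same flavour you outline.
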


\begin{remark}\label{remark:conds}
See~\citet{peng2008new},~\cite{peng2019law},~\citet{rokhlin2015central1} or \citet[Theorem 4.1]{fang2017limit}  for the proof. 
The theorem can be further generalized to non-identically distributed sequence $\{\epsilon_i\}_{i=1}^\infty$. For the convergence rate of Theorem~\ref{th:clt}, see~\citet{fang2017limit}, \citet{song2019normal}\footnote{The paper of~\citet{song2019normal} was submitted in 2017 and the earliest among the four.},~\citet{krylov2019shige} and~\citet{huang2019monotone}. 
\end{remark}

\begin{remark}\label{remark:indicator}
The function $\varphi$  can also be any Borel-measurable indicator function~\citep{peng2010nonlinear, yang2018improving}. 
\end{remark}

\begin{remark}\label{remark:Gheat}
The classical central limit theorem corresponds to the special case $\smin = \smax = \sigma$, where~\eqref{eq:def.G-heat} reduces to the heat equation and thus  
\begin{align*}
u(t, x; \varphi) = \int_{-\infty}^\infty  \phi(z)  \varphi(x +  \sigma \sqrt{t}  z ) dz   
\end{align*}
where  $\phi(x)$ is the  density function of the standard normal distribution.  We call the nonlinear partial differential equation~\eqref{eq:def.G-heat} a G-heat equation.
\end{remark}

Now we are ready to define G-normal distribution. Note that G-normal distribution is not a distribution in the traditional sense, and a ``random variable" $Z$ following G-normal distribution actually has distributional uncertainty. 
 
\begin{definition}[G-normal distribution]\label{def:gnormal}
Let $\cP_Z$ be a set of  probability measures defined on the space $(\Omega, \cF)$.
A measurable function $Z\colon  \Omega \mapsto \bbR$  is said to follow a G-normal distribution with lower variance $\smin^2$ and upper variance $\smax^2$ $(0 \leq \smin \leq \smax)$,   if, for every Lipschitz function $\varphi$, 
\begin{equation*}\label{eq:def.G-normal}
\sup\limits_{\pr  \in \cP_Z} \E_{\pr} [ \varphi(Z) ] =  \sup\limits_{\pr  \in \cP_Z} \int_\Omega \varphi(Z) \, d \pr  = u(1, 0; \varphi),
\end{equation*}
where $u(1, 0; \varphi)$ is as given in Theorem~\ref{th:clt}. 
\end{definition}

\begin{remark}\label{remark:gnormal}
As expected, when $\smin = \smax = \sigma$, the G-normal distribution reduces to the normal distribution $N(0, \sigma^2)$. 
\end{remark}

\begin{remark}\label{remark:sublinear}
In the sublinear expectation theory, $\cP_Z$ represents the collection of all possible probability measures underlying $Z$,  and  $u(1, 0; \varphi)$ is called the sublinear expectation of $\varphi(Z)$.  
For the existence of G-normal distribution, see~\citet[Theorem 2.1 and Chapter II.2]{peng2010nonlinear}. 
\end{remark}
  
When  $\varphi$ in Definition~\ref{def:gnormal} is an indicator function, we have 
\begin{align*}
p(A; \smin, \smax) \define \sup_{P \in \cP_Z} \int_\Omega  \ind{Z \in A} d \pr 
= \sup_{P \in \cP_Z} \pr \{  Z \in A \}. 
\end{align*}
Here $p(A) = p(A; \smin, \smax)$ is a set function defined for each $A \in \cF$, which clearly satisfies $p(\emptyset) = 0$ and $p(\Omega) = 1$.   
But unlike a probability measure, $p(\cdot)$ is not an additive function; it is called a Choquet capacity~\citep{choquet1954theory}, or capacity for short~\citep[see also][]{chen2005choquet, denis2011function}.
In this work, we are interested in the ``tail capacities" of the G-normal distribution, i.e. the functions $p_1$ and $p_2$ defined in~\eqref{eq:tailc}. They are the solutions to the G-heat equations~\eqref{eq:def.G-heat} with initial condition $\varphi(x) = \ind{x > c}$ and $\varphi(x) = \ind{|x| > c}$ respectively. 
Understanding the tail behavior of the G-normal distribution is crucial to the asymptotic theory of sublinear expectation spaces. 

For a formal exposition of the sublinear expectation theory, readers are referred to~\citet{peng2010nonlinear}. For the purposes of this paper, we only need to use Theorem~\ref{th:clt} to compute the tail capacities of the G-normal distribution.

\section{Tail capacities of the G-normal distribution}\label{sec:pval}
By Theorem~\ref{th:clt} and Definition~\ref{def:gnormal}, to compute the tail capacities of the G-normal distribution, we need to solve the corresponding Cauchy problems given in~\eqref{eq:def.G-heat}, which is often difficult since the G-heat equation is  nonlinear. 
Fortunately,  we have a closed-form solution for the one-sided tail capacity, $p_1(c)$. 
But for the two-sided tail capacity $p_2(c)$, a  closed-form solution is not available and we will offer an asymptotic approximation which has remarkable accuracy for large values of $c$.  
Recall that  $\phi$ and $\Phi$ respectively  denote the probability density function and the cumulative distribution function of  the standard normal distribution. 

\subsection{One-sided tail capacity}
We first   derive a closed-form solution to the G-heat equation~\eqref{eq:def.G-heat}  with initial condition $u (0, x)  = \ind{x > c}$. This result will be used later for approximating the solution to the G-heat equation with initial condition  $u (0, x) = \ind{|x| > c}$.

\begin{theorem}\label{th:one}
Consider the following Cauchy problem for the G-heat equation, 
$$u_t = \dfrac{1}{2} \left(  \smax^2 (u_{xx})^+ -  \smin^2 (u_{xx})^- \right),  \quad \quad  u(0, x) =  \ind{x > c},$$
where $c \in \bbR$.  
The solution is given by $u(t, x) = f \{ (x - c) / \sqrt{t} \}$ where 
\begin{equation*}\label{eq:sol.u}
f(y) = f(y; \smin, \smax)= \dfrac{2}{ \smax + \smin }  \int_{-y}^\infty \big \{   \phi  (z/\smax ) \ind{z \geq 0}  \, + \,  \phi  (z/\smin ) \ind{z < 0}  \big \} dz. 
\end{equation*}
\end{theorem}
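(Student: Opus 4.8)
The plan is to produce the solution by a self-similar ansatz and then verify it directly, using the uniqueness already granted by Theorem~\ref{th:clt}. Since the G-heat operator is invariant under the parabolic rescaling $(t,x)\mapsto(\lambda^{2}t,\lambda(x-c))$ and the initial datum $\ind{x>c}$ is invariant under $x-c\mapsto\lambda(x-c)$, the (unique) solution must be constant along the curves $x-c=\mathrm{const}\cdot\sqrt t$, hence of the form $u(t,x)=f\bigl((x-c)/\sqrt t\bigr)$ for a profile $f\colon\bbR\to[0,1]$ with $f(-\infty)=0$, $f(+\infty)=1$ (these encode $u(0,x)=\ind{x>c}$ off $x=c$). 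Substituting $\xi=(x-c)/\sqrt t$ and using $u_t=-\tfrac{1}{2t}\,\xi f'(\xi)$, $u_x=t^{-1/2}f'(\xi)$, $u_{xx}=t^{-1}f''(\xi)$ turns the PDE into the ordinary differential equation $-\xi f'(\xi)=\smax^{2}\bigl(f''(\xi)\bigr)^{+}-\smin^{2}\bigl(f''(\xi)\bigr)^{-}$ for $\xi\in\bbR$.

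The key step is to locate where $f''$ changes sign. Expecting $f$ to be increasing with a single inflection, I would guess the interface is exactly $\xi=0$, i.e.\ $f$ convex on $(-\infty,0)$ and concave on $(0,\infty)$; on each half-line the ODE then becomes a \emph{linear} one, $-\xi f'=\smax^{2}f''$ for $\xi<0$ and $-\xi f'=\smin^{2}f''$ for $\xi>0$, whose decaying solutions are $f'(\xi)=a\,\phi(\xi/\smax)$ and $f'(\xi)=b\,\phi(\xi/\smin)$ respectively. Imposing $f\in C^{1}$ at $\xi=0$ forces $a=b$, and $f(+\infty)-f(-\infty)=a(\smax+\smin)/2=1$ gives $a=2/(\smax+\smin)$; integrating from $\xi$ to $+\infty$ recovers exactly the claimed $f$. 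It then remains to confirm the guess is self-consistent, which is a short computation: from $f'(\xi)=\tfrac{2}{\smax+\smin}\bigl(\phi(\xi/\smax)\,\ind{\xi\le 0}+\phi(\xi/\smin)\,\ind{\xi>0}\bigr)$ one gets $f''(\xi)=-\tfrac{2\xi}{\smax+\smin}\bigl(\smax^{-2}\phi(\xi/\smax)\,\ind{\xi<0}+\smin^{-2}\phi(\xi/\smin)\,\ind{\xi>0}\bigr)$, so $f''>0$ on $(-\infty,0)$, $f''<0$ on $(0,\infty)$, and both one-sided limits of $f'$ and of $f''$ at $0$ agree (the latter both equal to $0$); hence $f\in C^{2}(\bbR)$ and substituting back verifies the ODE identically. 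Consequently $u(t,x)=f\bigl((x-c)/\sqrt t\bigr)$ is a bounded classical solution of the G-heat equation on $(0,\infty)\times\bbR$ with $\lim_{t\downarrow 0}u(t,x)=\ind{x>c}$ for every $x\ne c$.

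To finish I would invoke uniqueness: by Theorem~\ref{th:clt} and Remark~\ref{remark:indicator} the Cauchy problem with datum $\ind{\cdot>c}$ has a unique viscosity solution, and the bounded classical solution just built is in particular a viscosity solution attaining the datum everywhere except at the single point $x=c$ — which is irrelevant for the capacity, since $\{x=c\}$ has measure zero under every admissible control and, more formally, the comparison principle for the G-heat operator (cf.\ the references cited after Theorem~\ref{th:clt}) still pins the solution down. This yields the stated formula; as a by-product it gives the closed form $p_1(c)=f(-c)=\tfrac{2\smax}{\smax+\smin}\,\Phi(-c/\smax)$ for $c\ge 0$, which exceeds $\alpha$ exactly when $\smax>\smin$ at $c=\smax\Phi^{-1}(1-\alpha)$, matching the discussion in Section~\ref{sec:intro}.

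The only genuinely non-mechanical point is pinning down that the free boundary between the two diffusivities is the straight line $x=c$ (equivalently, that the self-similar profile switches convexity exactly at $\xi=0$). I would present this as a verified guess, as above, since the explicit computation shows consistency and uniqueness then closes the argument; an a~priori derivation would instead argue, via the control formulation of Problem~\ref{prob:control}, that the optimal variance choice is bang--bang, switching precisely when the running sum crosses $c$. A secondary nuisance is the discontinuous initial datum, but that is dispatched by working on the complement of $\{x=c\}$ and leaning on the uniqueness supplied by Theorem~\ref{th:clt}.
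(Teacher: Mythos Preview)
Your proposal is correct and follows essentially the same approach as the paper: exploit parabolic self-similarity to reduce to an ODE for a profile $f$, posit that the convexity switch occurs at $\xi=0$, solve the resulting linear ODEs on each half-line, match at the origin, and verify. You are somewhat more explicit than the paper about the guess-and-verify structure for the free boundary and about the discontinuous initial datum at $x=c$, but the argument is the same.
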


\begin{proof}
This result is also given in~\citet{yang2018improving} but without derivation. 
Here we show how the solution is derived. 
For the uniqueness and the existence of this solution, see~\citet{peng2010nonlinear, yang2018improving}. 

Notice that if $u(t, x)$ is the solution to this Cauchy problem, then, for any $a > 0$, $u(at, \sqrt{a}(x - c) + c)$ is also a solution.
This implies that $u(t, x) = f \{ (x - c) / \sqrt{t} \}$ for some function $f$. 
Then routine calculations show that the function $f = f(y)$ must satisfy 
\begin{equation*}
 - y  f_y = \smax^2 (f_{yy})^+  -  \smin^2 (f_{yy})^-. 
\end{equation*}
If $u$ is twice differentiable in $x$, $f(y)$ must be continuous at $0$ and thus it can be expressed as 
\begin{align*}
f(y) =   \int_{-y}^\infty \Big \{ \dfrac{A_1}{\smax}  \phi  (z/\smax ) \ind{z \geq 0}  \, + \, \dfrac{A_2}{\smin} \phi  (z/\smin ) \ind{z < 0}  \Big \} dz + B, 
\end{align*}
where $A_1, A_2, B_1, B_2$ are some constants to be determined. 
By the initial conditions, $f(\infty) = 1$ and $f( -\infty) = 0$, we obtain that $A_1 + A_2 = 2$ and $B = 0$. 
Finally, the twice differentiability of $f(y)$  amounts to matching the left and right first derivatives at $y = 0$,  which yields 
$A_1/A_2 = \smax/\smin$. 
The theorem is then proved by checking that $u(t, x) = f \{ (x - c) / \sqrt{t} \}$ is in $C^{1,2}$. 
\end{proof}

\begin{corollary}\label{cor:p1}
The one-sided tail capacity of the G-normal distribution is
\begin{align*}
 p_1(c; \smin, \smax) = \dfrac{2}{ \smax + \smin }  \int_c^\infty \big \{   \phi  (z/\smax ) \ind{z \geq 0} 
 \, + \,   \phi  (z/\smin ) \ind{z < 0}  \big \} dz. 
\end{align*}
\end{corollary}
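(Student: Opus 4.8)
The plan is to obtain Corollary~\ref{cor:p1} as an immediate specialization of Theorems~\ref{th:clt} and~\ref{th:one}. First I would note that, by the definition of $p_1$ in~\eqref{eq:tailc},
\begin{equation*}
p_1(c; \smin, \smax) = \lim_{n \to \infty} \sup_{\{\sigma_i\} \in \Sigma(\smin, \smax)} \E\big[ \varphi(\sqrt{n}\, \bar{X}_n) \big], \qquad \varphi(x) = \ind{x > c},
\end{equation*}
which is exactly the left-hand side of~\eqref{eq:clt} for this choice of $\varphi$. Granting that Theorem~\ref{th:clt} applies to this $\varphi$, we get $p_1(c; \smin, \smax) = u(1, 0; \varphi)$, where $u$ is the viscosity solution of the G-heat equation~\eqref{eq:def.G-heat} with $u(0, x) = \ind{x > c}$. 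Theorem~\ref{th:one} identifies this solution as $u(t, x) = f\{(x - c)/\sqrt{t}\}$, so evaluating at $(t, x) = (1, 0)$ gives $p_1(c; \smin, \smax) = f(-c)$. Substituting $y = -c$ in the formula for $f$ (so that the lower integration limit $-y$ becomes $c$) yields precisely the claimed expression, and the proof is complete.

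The one point requiring care --- and the only real obstacle --- is that $\varphi(x) = \ind{x > c}$ is not Lipschitz, whereas Theorem~\ref{th:clt} is stated for Lipschitz test functions. I would dispose of this by appealing to Remark~\ref{remark:indicator}, which records that~\eqref{eq:clt} remains valid when $\varphi$ is any Borel-measurable indicator function~\citep{peng2010nonlinear, yang2018improving}. If one prefers a self-contained argument, a sandwiching works: for $\delta > 0$ pick Lipschitz functions $\varphi_\delta^- \le \ind{\cdot > c} \le \varphi_\delta^+$ that agree with $\ind{\cdot > c}$ outside $(c - \delta, c + \delta)$; the comparison principle for~\eqref{eq:def.G-heat} (equivalently, monotonicity of the sublinear expectation) gives $u(1, 0; \varphi_\delta^-) \le p_1(c) \le u(1, 0; \varphi_\delta^+)$ after letting $n \to \infty$, and since Theorem~\ref{th:one} shows $c \mapsto f(-c)$ is continuous (indeed $f \in C^1$), letting $\delta \downarrow 0$ squeezes both bounds to $f(-c)$.

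Beyond that there is no substantive difficulty: the corollary is a direct read-off once the discontinuous initial condition is justified. The only bookkeeping to watch is the sign in the self-similar variable --- one evaluates $f$ at $y = (x-c)/\sqrt{t} = -c$, not at $+c$, when $x = 0$ and $t = 1$.
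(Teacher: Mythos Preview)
Your proposal is correct and matches the paper's approach: the paper's proof is the single sentence ``This is immediate from the definition of G-normal distribution,'' which amounts to exactly the identification $p_1(c)=u(1,0;\ind{\cdot>c})=f(-c)$ that you spell out. Your extra care about the non-Lipschitz indicator (via Remark~\ref{remark:indicator} or the sandwiching argument) only makes explicit what the paper leaves implicit.
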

\begin{proof}
This is immediate from the definition of G-normal distribution. 
\end{proof}

\begin{remark}
One can check that  $p_1(c)$ behaves just like a tail probability function in the sense that, for any $0 < \smin \leq \smax$, $p_1(c)$ is monotone decreasing with $p_1(-\infty) = 1$ and $p_1(\infty) = 0.$
\end{remark}

\begin{corollary}\label{cor:v} 
The solution to the following Cauchy problem,
 $$v_t = \dfrac{1}{2} \left(  \smax^2 (v_{xx})^+ -  \smin^2 (v_{xx})^- \right),  \quad \quad  v(0, x) =  \ind{x < - c},$$
 is given by 
$v(t, x) = f\{ (- x   - c) / \sqrt{t} \}$ where $f$ is as given in Theorem~\ref{th:one}. 
\end{corollary}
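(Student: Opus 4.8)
The plan is to deduce this from Theorem~\ref{th:one} by exploiting a reflection symmetry of the G-heat equation. Because the nonlinearity on the right-hand side of~\eqref{eq:def.G-heat} depends on the second spatial derivative $u_{xx}$ only, and not on $u_x$, the equation is invariant under the substitution $x \mapsto -x$. Concretely, if $v$ denotes the (viscosity) solution of the Cauchy problem in the statement, set $w(t, x) \define v(t, -x)$. Then $w_t(t,x) = v_t(t,-x)$ and $w_{xx}(t,x) = v_{xx}(t,-x)$, so $w$ solves the same G-heat equation; and its initial data is $w(0, x) = v(0, -x) = \ind{-x < -c} = \ind{x > c}$, which is exactly the initial condition of Theorem~\ref{th:one}.

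First I would record this reflection argument at the level of viscosity solutions: replacing a smooth test function $\psi(t,x)$ by $\psi(t,-x)$ leaves $(\psi_{xx})^+$ and $(\psi_{xx})^-$ unchanged, so $x \mapsto -x$ carries viscosity sub- and supersolutions of~\eqref{eq:def.G-heat} to viscosity sub- and supersolutions of the same equation, while transforming the bounded, almost-everywhere-continuous initial data as above. By the uniqueness already invoked in Theorem~\ref{th:one}, it follows that $w(t,x) = f\{(x - c)/\sqrt t\}$. Undoing the substitution gives $v(t,x) = w(t,-x) = f\{(-x-c)/\sqrt t\}$, which is the claimed formula.

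I do not expect any substantial obstacle here, since uniqueness of the viscosity solution does the real work; the only point deserving a line of justification is that the map $x \mapsto -x$ genuinely preserves the solution class for this particular nonlinear equation, and that holds precisely because the equation has no first-order spatial term. As an alternative route that avoids viscosity-solution bookkeeping, one could mimic the proof of Theorem~\ref{th:one} directly: check that $v(t,x) = f\{(-x-c)/\sqrt t\}$ belongs to $C^{1,2}$ for $t > 0$, verify by the same routine computation (using the profile equation $-y f_y = \smax^2 (f_{yy})^+ - \smin^2 (f_{yy})^-$ together with the chain rule) that it satisfies the PDE there, and confirm that $v(t,x) \to \ind{x < -c}$ as $t \downarrow 0$ for every $x \neq -c$. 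Either way the verification is short.
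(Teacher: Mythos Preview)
Your proposal is correct and follows essentially the same approach as the paper, which simply notes that the result ``follows immediately from Theorem~\ref{th:one} by symmetry.'' You have merely spelled out in detail what that symmetry is (the reflection $x \mapsto -x$, which leaves the equation invariant because it involves only $u_{xx}$) and how it carries the initial data $\ind{x>c}$ to $\ind{x<-c}$.
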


\begin{proof}
This follows immediately from Theorem~\ref{th:one} by symmetry. 
\end{proof}

\subsection{Two-sided tail capacity}
To compute the two-sided tail capacity $p_2(c)$ defined in~\eqref{eq:tailc}, we need to solve the G-heat equation~\eqref{eq:def.G-heat} with initial condition $u(0, x) = \ind{|x| > c}$,  which does not admit a closed-form solution. However, we do have an asymptotic result that turns out to be very useful. 

\begin{theorem}\label{th:two}
Let $w(t, x)$ be the solution to the Cauchy problem, 
$$w_t = \dfrac{1}{2} \left(  \smax^2 (w_{xx})^+ -  \smin^2 (w_{xx})^- \right),  \quad \quad  w(0, x) =  \ind{|x| > c}.$$
Let $u(t, x) = f\{ (x - c) / \sqrt{t} \}$  be as given in Theorem~\ref{th:one} and $v(t, x) = f\{ (- x - c) / \sqrt{t} \}$ be as given in Corollary~\ref{cor:v}. 
For any $x \in \bbR$ and $t \geq 0$, if  $c >   \smax \sqrt{t}   / 2$,    
\begin{align*} 
& 0 \leq  u(t, x) + v(t, x) - w(t, x) \leq \dfrac{2 (\smax - \smin) }{\smax} \Phi \left( - \dfrac{ 2c  }{  \smax \sqrt{t} } \right). 
\end{align*}
\end{theorem}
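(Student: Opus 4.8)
The key structural fact is that, by Theorem~\ref{th:one} and Corollary~\ref{cor:v}, $u$ and $v$ are \emph{explicit} $C^{1,2}$ functions, so that $g \define u+v$ can be analyzed by elementary calculus and then compared with $w$ via the comparison principle for the G-heat equation. Throughout write $L[\varphi] \define \tfrac12\big(\smax^2 (\varphi_{xx})^+ - \smin^2 (\varphi_{xx})^-\big)$, and note that the $t=0$ case is trivial (all three functions then coincide a.e.\ and the right-hand side is $0$), so assume $t>0$; since $c>\smax\sqrt t/2\ge 0$ we have $\ind{|x|>c}=\ind{x>c}+\ind{x<-c}$, hence $g(0,\cdot)=w(0,\cdot)$.

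\emph{Step 1: $g$ is a supersolution, and the lower bound.} Using $u_t=L[u]$, $v_t=L[v]$ and the elementary identity $a^+ + b^+ - (a+b)^+ = a^- + b^- - (a+b)^- = \min(a^+,b^-)+\min(a^-,b^+)\ge 0$, one computes
\[
R(t,x)\define g_t - L[g] = \tfrac12(\smax^2-\smin^2)\big(\min((u_{xx})^+,(v_{xx})^-)+\min((u_{xx})^-,(v_{xx})^+)\big)\ \ge 0 .
\]
From the explicit $f$ one has $f''(y)=\tfrac{2}{\smax+\smin}(-y/\smin^2)\phi(y/\smin)$ for $y>0$ and $f''(y)=\tfrac{2}{\smax+\smin}(-y/\smax^2)\phi(y/\smax)$ for $y<0$, so $u_{xx}>0\iff x<c$ and $v_{xx}>0\iff x>-c$. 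Therefore $R\equiv 0$ on $\{|x|<c\}$, while on $\{x>c\}$ and on $\{x<-c\}$ we get $R=\tfrac12(\smax^2-\smin^2)\min(|u_{xx}|,|v_{xx}|)$. In particular $g$ is a (classical, hence viscosity) supersolution with $g(0,\cdot)=w(0,\cdot)$, so the comparison principle gives $g\ge w$, i.e.\ $u+v-w\ge 0$. (Equivalently, the lower bound is just sub-additivity of the sublinear expectation applied to $\ind{|\cdot|>c}=\ind{\cdot>c}+\ind{\cdot<-c}$.)

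\emph{Step 2: bounding the defect.} The delicate point is that on $\{x>c\}$ the minimum must be bounded by $|v_{xx}|$ and on $\{x<-c\}$ by $|u_{xx}|$ — these are the factors whose Gaussian argument is bounded below by $2c/(\smax\sqrt s)$. Since $s\mapsto y\phi(y)$ has its maximum at $y=1$ and is decreasing on $[1,\infty)$, and the hypothesis $c>\smax\sqrt t/2$ forces $2c/(\smax\sqrt s)>1$ for all $s\in(0,t]$, the supremum over $x$ is attained "at the corner" $|x|=c$, giving, for every $x$ and every $s\in(0,t]$,
\[
R(s,x)\ \le\ \rho(s)\ \define\ \frac{2c(\smax-\smin)}{\smax^2}\,s^{-3/2}\,\phi\!\left(\frac{2c}{\smax\sqrt s}\right).
\]
(Bounding instead by the "near" factor would leave $y\phi(y)$ free to reach its interior maximum at $y=1$, producing a bound of order $s^{-1}$, which is \emph{not} integrable near $s=0$ — the hypothesis is exactly what removes this obstruction.) Since $\rho(s)\to 0$ as $s\to 0^+$, the function $\eta(t)\define\int_0^t\rho(s)\,ds$ is finite, continuous, and $\eta(0)=0$.

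\emph{Step 3: comparison and the integral.} Because $g\in C^{1,2}$ and $\tilde g\define g-\eta(t)$ has the same spatial derivatives as $g$, we have $\tilde g_t = L[\tilde g]+R-\rho(t)\le L[\tilde g]$, so $\tilde g$ is a subsolution of the G-heat equation with $\tilde g(0,\cdot)=w(0,\cdot)$; the comparison principle yields $\tilde g\le w$, i.e.\ $u+v-w=g-w\le\eta(t)$. Finally the substitution $r=2c/(\smax\sqrt s)$ (so $s^{-3/2}\,ds=-(\smax/c)\,dr$) gives
\[
\eta(t)=\frac{2(\smax-\smin)}{\smax}\int_{2c/(\smax\sqrt t)}^{\infty}\phi(r)\,dr=\frac{2(\smax-\smin)}{\smax}\,\Phi\!\left(-\frac{2c}{\smax\sqrt t}\right),
\]
which is exactly the claimed upper bound. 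The main obstacle is Step~2: one needs the precise pointwise form of the defect $R$ and the observation that it must be dominated by the "far" Gaussian factor, with $c>\smax\sqrt t/2$ serving to keep that factor's argument past the mode of $y\phi(y)$ — this is what makes the defect integrable in time and yields the clean closed-form bound.
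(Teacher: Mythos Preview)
Your proof is correct and follows essentially the same route as the paper: compute the defect $R=L[u]+L[v]-L[u+v]$ (the paper's $\delta$), observe it vanishes on $\{|x|<c\}$ and equals $\tfrac12(\smax^2-\smin^2)\min(|u_{xx}|,|v_{xx}|)$ on $\{|x|>c\}$, bound it there by the ``far'' factor $|v_{xx}|$ (resp.\ $|u_{xx}|$) using that $c>\smax\sqrt t/2$ pushes the Gaussian argument past the mode of $y\phi(y)$, and integrate in time to obtain the stated $\Phi$-bound. The only cosmetic difference is that you spell out the comparison step by hand via the explicit super-/subsolutions $g$ and $g-\eta(t)$, whereas the paper invokes a packaged comparison theorem (Peng, Appendix~C) that directly yields $0\le u+v-w\le\int_0^t\bar\delta(s)\,ds$; the underlying argument is identical.
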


\begin{proof}
The goal is to show that for any fixed $(t, x) \in [0, 1] \times \bbR$ and sufficiently large $c$, we  have $w(t, x) \approx u(t, x) + v(t, x)$ (it may be helpful to think of the heat transfer model to gain some intuition.)
To this end, define 
\begin{align*}
G(x) = \dfrac{1}{2} ( \smax^2  x^+  - \smin^2 x^- ) ,\quad  \delta(t, x) = G(u_{xx}) + G(v_{xx}) - G(u_{xx} + v_{xx}).
\end{align*}
Observe that $G$ is a sublinear function and thus $\delta(t, x) \geq 0$. Further, $u + v$ satisfies the initial condition that $u(0, x) + v(0, x) = \ind{|x| > c}.$
Now if we can find a function $\bar{\delta} (t)$ such that $\delta(t, x) \leq \bar{\delta}(t)$ for every $x$, we can apply the comparison theorem given in~\citet[Appendix C]{peng2010nonlinear}   to get 
\begin{equation}\label{eq:comp}
0  \leq u(t, x) + v(t, x)  - w(t, x)  \leq  \int_0^t \bar{\delta}(s) ds,
\end{equation}
for all $t \geq 0, \, x \in \bbR$. 
It is straightforward to check that 
\begin{equation}\label{eq:delta}
\delta(t, x) = \left\{ \begin{array}{cc}
\dfrac{\smax^2 - \smin^2}{2} \min\{ |u_{xx}|, |v_{xx}| \} ,  &  \text{ if  } \; u_{xx} v_{xx} < 0,  \\
0  ,  &  \text{ if  } \; u_{xx} v_{xx} \geq 0.
\end{array}
\right. 
\end{equation}
Hence we only need to bound $\min\{ |u_{xx}|, |v_{xx}| \}.$
Direct calculations give that 
\begin{align*}
u_{xx}(t, x) \;&= \dfrac{1}{t } f_{yy} \left(  \dfrac{ x   -c}{\sqrt{t} } \right) , \quad \quad 
v_{xx}(t, x) = \dfrac{1}{t } f_{yy} \left(  \dfrac{ - x   -c}{\sqrt{t} } \right) , \\ 
f_{yy} (y) \;&= 
 \dfrac{ - 2y }{ \smax + \smin }    \left \{  \dfrac{ 1}{\smax^2}  \phi  (y/\smax ) \ind{y \leq 0}  \, + \,  \dfrac{1}{\smin^2} \phi  (y/\smin ) \ind{y > 0}  \right \}  . 
\end{align*}
Notice that we only need to bound $\min\{ |u_{xx}|, |v_{xx}| \}$ on the region where $u_{xx} v_{xx} < 0$, which is   $(c , \infty) \cup (-\infty, - c)$. 
For  $x > c $, we have 
\begin{equation}\label{eq:ineq}
\min\{ |u_{xx}|, |v_{xx}| \} \leq |v_{xx}| \leq \sup\limits_{x > c } v_{xx}(t, x) =
\dfrac{1}{t } \sup\limits_{x > c }  f_{yy} \left(  \dfrac{ - x  -c}{\sqrt{t} } \right) . 
\end{equation}
Assume $c > \smax \sqrt{t} / 2$.  Then for any $x > c $, $ (- x  -c) / \sqrt{t} < - \smax $.
Observing that $\sup_{y<0} |f_{yy}(y)|$ is attained at $y = - \smax$ and $|f_{yy}(y)|$ is monotone increasing on $(-\infty, -\smax)$, 
we can bound the right-hand side of~\eqref{eq:ineq} by 
\begin{equation}\label{eq:ineq2}
\min\{ |u_{xx}|, |v_{xx}| \} \leq \dfrac{1}{t }   f_{yy} \left(  \dfrac{  -2c   }{\sqrt{t} } \right). 
\end{equation}
The same argument shows that this bound also holds for $x < -c $ and thus holds for $x \in \bbR$. 
Using \eqref{eq:delta} and~\eqref{eq:ineq2} and the assumption $c > \smax \sqrt{t} / 2$, we obtain the bound 
\begin{equation*}
\bar{\delta}(s) = \dfrac{ \smax^2 - \smin^2 }{2s }  f_{yy} \left(  \dfrac{  -2c   }{\sqrt{s} } \right)  = \dfrac{ 2 c (\smax - \smin)   }{ \smax^2 \sqrt{ s^3} }  \phi \left(  \dfrac{ 2c   }{ \smax \sqrt{s} } \right)
\end{equation*}
for any $s \in [0, t]$. Integrating $\bar{\delta}(s)$ with respect to $s$ gives 
\begin{equation}\label{eq:err}
\int_0^t  \bar{\delta}(s) ds    = \dfrac{2 (\smax - \smin) }{\smax} \Phi \left( - \dfrac{ 2c  }{  \smax \sqrt{t} } \right). 
\end{equation}
The theorem is then proved by recalling~\eqref{eq:comp}. 
\end{proof}
 
Theorem~\ref{th:two} suggests that we may approximate the two-sided tail capacity using the sum of two one-sided tail capacities. For any fixed $t$, as $c \rightarrow \infty$,  the error term~\eqref{eq:err} quickly goes to zero due to the fast decay of the function $\Phi$.  However, for any $(t, x)$,  $u(t, x) + v(t, x)$ also goes to zero  as $c \rightarrow \infty$.
The next corollary confirms that the relative error of this approximation is negligible.

\begin{corollary}\label{cor:w2}
Under the setup of Theorem~\ref{th:two} and assuming  $c >  \smax    / 2$,  we have 
\begin{equation*}\label{eq:asymp}
0 \leq  u(t, x) + v(t, x) - w(t, x) \leq  
 \dfrac{(\smax - \smin) \sqrt{t} }{c \sqrt{2 \pi}  } \exp\left( - \frac{ 2c^2 }{  \smax^2 t} \right) . 
\end{equation*}
Further, the relative error is given by 
\begin{equation*}
0 \leq  \dfrac{ u(t, x) + v(t, x) - w(t, x) }{u(t, x) + v(t, x)} \leq  
 \dfrac{(\smax^2 - \smin^2)(c^2/\smax^2 + t)}{4c^2}   \exp\left( - \frac{ 3c^2 }{ 2 \smax^2 t } \right)   . 
\end{equation*}
\end{corollary}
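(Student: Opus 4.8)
The plan is to derive both inequalities from Theorem~\ref{th:two} by feeding in the two elementary Mills-ratio estimates for the standard normal tail, namely $\frac{z}{1+z^{2}}\phi(z)\le\Phi(-z)\le z^{-1}\phi(z)$, both valid for every $z>0$. The left-hand (nonnegativity) parts of both displays are simply inherited from Theorem~\ref{th:two}, so only the right-hand sides need work.

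\textbf{The first display.} Since the standing range is $(t,x)\in[0,1]\times\bbR$, the hypothesis $c>\smax/2$ already forces $c>\smax\sqrt t/2$, so Theorem~\ref{th:two} is in force and bounds $u(t,x)+v(t,x)-w(t,x)$ above by $\frac{2(\smax-\smin)}{\smax}\Phi\{-2c/(\smax\sqrt t)\}$. I would plug the upper Mills bound with $z=2c/(\smax\sqrt t)$ into this: the stray factor $\smax$ cancels and, on writing $\phi(z)=(2\pi)^{-1/2}e^{-z^{2}/2}$, exactly $\frac{(\smax-\smin)\sqrt t}{c\sqrt{2\pi}}\exp(-2c^{2}/(\smax^{2}t))$ survives.

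\textbf{The relative-error display.} The numerator is now controlled by the first display, so everything hinges on a lower bound for the denominator $u(t,x)+v(t,x)$ that is uniform in $x$. Comparing the formulas in Theorem~\ref{th:one} and Corollary~\ref{cor:p1} shows $f(y)=p_{1}(-y;\smin,\smax)$, hence, using also Corollary~\ref{cor:v}, $u(t,x)+v(t,x)=p_{1}\{(c-x)/\sqrt t\}+p_{1}\{(c+x)/\sqrt t\}=:g(x)$, which is even in $x$; so assume $x\ge0$. A routine differentiation shows that for $0\le x\le c$ the derivative $g'(x)$ is a positive multiple of $\phi\{(c-x)/(\smax\sqrt t)\}-\phi\{(c+x)/(\smax\sqrt t)\}$, which is $\ge0$ because $\phi$ decreases on $[0,\infty)$ and $c-x\le c+x$; thus $g$ is nondecreasing on $[0,c]$, so
\[
u(t,x)+v(t,x)\ \ge\ g(0)\ =\ 2\,p_{1}(c/\sqrt t)\ =\ \frac{4\smax}{\smax+\smin}\,\Phi\!\left(-\frac{c}{\smax\sqrt t}\right),
\]
the last equality by the change of variable $z\mapsto\smax z$ in the integral defining $p_{1}$.

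\textbf{Combining, and the main obstacle.} Feeding the lower Mills bound with $z=c/(\smax\sqrt t)$ into the display above and dividing the first-display estimate by the result, the exponentials combine as $e^{-2c^{2}/(\smax^{2}t)}/e^{-c^{2}/(2\smax^{2}t)}=e^{-3c^{2}/(2\smax^{2}t)}$, and after the simplification $(\smax^{2}t+c^{2})/(\smax^{2}c^{2})=(c^{2}/\smax^{2}+t)/c^{2}$ the prefactor collapses to $\frac{(\smax^{2}-\smin^{2})(c^{2}/\smax^{2}+t)}{4c^{2}}$, as claimed. I expect the only genuinely delicate point to be the \emph{sharpness} of the denominator bound $u+v\ge 2p_{1}(c/\sqrt t)$: the lazy estimate $u+v\ge p_{1}(c/\sqrt t)$ (keeping only the term with the smaller argument) loses a factor of $2$ and would give the constant $1/2$ in place of $1/4$. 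Securing the $2$ amounts to knowing that $g$ attains its global minimum at the origin, which holds as soon as $2p_{1}(c/\sqrt t)\le1$; in the residual small-$c$ regime, where instead $g(x)\ge1$ for all $x$ (one checks $g$ increases past $c$ up to $x=c(\smax+\smin)/(\smax-\smin)$ and then decays to $1$), the asserted bound has to be verified directly by an elementary Gaussian computation. Everything beyond these points is Mills-ratio bookkeeping.
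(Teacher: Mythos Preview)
Your approach is essentially identical to the paper's: both derive the absolute-error bound by plugging the upper Mills ratio $\Phi(-z)\le z^{-1}\phi(z)$ into the estimate of Theorem~\ref{th:two}, and both obtain the relative-error bound by minimizing $u(t,\cdot)+v(t,\cdot)$ over $x$, evaluating at $x=0$, and applying the lower Mills ratio. The paper simply asserts that ``by straightforward calculations, one can show that $u(t,x)+v(t,x)$ is minimized at $x=0$'' and then proceeds exactly as you do; your derivative computation and your observation about the residual regime $2p_1(c/\sqrt t)>1$ are in fact more careful than what the paper records.
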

\begin{proof}
It is well known that the tail probabilities of the standard normal distribution can be bounded by  
\begin{align*}
\frac{x}{x^2 + 1} \leq \frac{  \Phi( - x) }{ \phi(x)} \leq \frac{1}{x}. 
\end{align*}
Hence the error term~\eqref{eq:err} can be bounded by 
\begin{align*}
\int_0^t  \bar{\delta}(s) ds   \leq  \dfrac{ (\smax - \smin) \sqrt{t}}{ c \sqrt{2\pi} } \exp\left( - \frac{ 2c^2 }{  \smax^2 t} \right) . 
\end{align*}
By straightforward calculations, one can show  that $u(t, x) + v(t, x)$ is minimized at $x = 0$.  By Theorem~\ref{th:one}, for $c \geq 0$, 
\begin{align*} 
u(t, 0) = v(t, 0) =  \dfrac{2\smax}{\smax + \smin}    \Phi \left( - \frac{ c }{  \smax \sqrt{t} }  \right) 
\geq   \dfrac{\sqrt{2} }{\sqrt{\pi} (\smax + \smin)} \dfrac{c \smax^2 \sqrt{t} }{c^2 + \smax^2 t}   \exp\left( - \frac{  c^2 }{ 2 \smax^2 t} \right), 
\end{align*}
which then yields the relative error bound stated in the theorem. 
\end{proof}

\begin{corollary}\label{cor:two}
Let $p_1, p_2$ be the tail capacities of the G-normal distribution as given in~\eqref{eq:tailc}.  As $c \rightarrow \infty$, we have $p_2(c) \sim 2p_1(c)$ with the relative error given by 
\begin{align*}
\dfrac{ | p_2(c) - 2p_1(c)  | }{ 2p_1(c) }  \lesssim \dfrac{1 - \smin^2/\smax^2}{4} \exp\left( -\frac{3c^2}{2\smax^2} \right), 
\end{align*}
where $\sim$ denotes asymptotic equivalence and $\lesssim$ means ``asymptotically less than". 
\end{corollary}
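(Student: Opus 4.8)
The plan is to specialize Corollary~\ref{cor:w2} to the point $(t,x)=(1,0)$ and then convert the resulting estimate for the G-heat solutions into a statement about the tail capacities, using Theorem~\ref{th:clt} together with Remark~\ref{remark:indicator} and Definition~\ref{def:gnormal}.

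First I would set up the dictionary between the PDE and the capacity formulations. By Remark~\ref{remark:indicator}, Theorem~\ref{th:clt} remains valid when $\varphi$ is a Borel indicator, so $p_1(c)=u(1,0)$ for the function $u$ of Theorem~\ref{th:one} and $p_2(c)=w(1,0)$ for the function $w$ of Theorem~\ref{th:two}. Plugging $(t,x)=(1,0)$ into the closed forms $u(t,x)=f\{(x-c)/\sqrt{t}\}$ and $v(t,x)=f\{(-x-c)/\sqrt{t}\}$ gives $u(1,0)=v(1,0)=f(-c)$, and the explicit formula for $f$ in Theorem~\ref{th:one} shows $f(-c)=p_1(c)$ (this is exactly Corollary~\ref{cor:p1}). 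Hence $u(1,0)+v(1,0)=2p_1(c)$.

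Next, since $c\to\infty$, eventually $c>\smax/2$, so Corollary~\ref{cor:w2} applies at $t=1$, $x=0$ and yields
\begin{equation*}
0 \;\le\; \frac{2p_1(c)-p_2(c)}{2p_1(c)} \;\le\; \frac{(\smax^2-\smin^2)(c^2/\smax^2+1)}{4c^2}\,\exp\!\left(-\frac{3c^2}{2\smax^2}\right).
\end{equation*}
In particular $p_2(c)\le 2p_1(c)$, hence $|p_2(c)-2p_1(c)|=2p_1(c)-p_2(c)$, and since the right-hand side tends to $0$ we obtain $p_2(c)\sim 2p_1(c)$.

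Finally, to match the stated bound I would observe that $(c^2/\smax^2+1)/c^2\to 1/\smax^2$ as $c\to\infty$, so the coefficient $(\smax^2-\smin^2)(c^2/\smax^2+1)/(4c^2)$ is asymptotically equivalent to $(1-\smin^2/\smax^2)/4$; replacing it by this limit is precisely what the symbol $\lesssim$ records. There is essentially no serious obstacle here: all of the analytic content has already been carried out in Theorem~\ref{th:two} and Corollary~\ref{cor:w2}, and the only points requiring a little care are keeping the inequality one-sided (the lower bound comes from the subadditivity of $G$, already established in Theorem~\ref{th:two}) and disposing of the absolute value, which is harmless since $2p_1(c)-p_2(c)\ge 0$.
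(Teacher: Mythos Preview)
Your proposal is correct and follows essentially the same approach as the paper: identify $2p_1(c)=u(1,0)+v(1,0)$ and $p_2(c)=w(1,0)$, then apply Corollary~\ref{cor:w2} at $t=1$. You have spelled out more detail than the paper (the dictionary via Theorem~\ref{th:clt} and Remark~\ref{remark:indicator}, and the asymptotic simplification of the coefficient to $(1-\smin^2/\smax^2)/4$), but the argument is the same.
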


\begin{proof}
Notice that, by Definition~\ref{def:gnormal}, $2 p_1(c) = u(1, 0) + v(1, 0)$ and $p_2(c) = w(1, 0)$. 
Hence we only need to apply Corollary~\ref{cor:w2} with $t = 1$. 
\end{proof}

\begin{remark}
This approximation is accurate enough for usual purposes. 
For example,  using the bound given in Corollary~\ref{cor:w2}, for $\smax  = 1, \smin = 0.8$, we have $p_2( \Phi^{-1}(0.95) ) = 0.11$ with RE (relative error) $< 0.002$, $p_2( \Phi^{-1}(0.975) ) = 0.056$ with 
RE $< 4 \times 10^{-4}$, and $p_2( \Phi^{-1}(0.995) ) = 0.011$ with RE $< 5 \times 10^{-6}$.  
\end{remark}

\section{Hypothesis testing with heteroscedastic data}\label{sec:application}
To completely solve  Problem~\ref{prob:control}, we need to find the asymptotically optimal policies $\{\sigma_i\}_{i=1}^\infty$ that attain the supremums in~\eqref{eq:tailc}. 
This can be most conveniently computed using the stochastic control theory and Hamilton-Jacobi-Bellman   equation, which we briefly explain in Appendix A. 

Now we come back to the hypothesis testing problem described in Section~\ref{sec:intro}. First, consider the one-sided test. 
Since all the observations $X_1, X_2, \dots$ have mean zero and variance less than or equal to $\smax^2$,  as $n\rightarrow \infty$, $s_n^2$ is less than or equal to $\smax^2$ with probability one. Hence, if we conduct a test with rejection region $\{T_n(X) > \Phi^{-1}(1-\alpha) \}$,  the probability of rejecting the null hypothesis, $\E(X_i) = 0$, can be  bounded from below by $\pr ( \sqrt{n} \bar{X}_n  >  \smax \Phi^{-1} (1-\alpha)  )  $.  
However, if the experimenter is able to choose any $\{\sigma_i\} \in \Sigma(\smin, \smax)$, then 
\begin{equation}\label{eq:test}
  \lim\limits_{n \rightarrow \infty} \sup\limits_{ \{\sigma_i\} \in \Sigma(\smin, \smax) }   \pr ( \sqrt{n} \bar{X}_n  >  \smax \Phi^{-1} (1-\alpha)  )  
=   p_1 (   \smax \Phi^{-1} (1-\alpha); \smin, \smax ),  
\end{equation}
and by Theorem~\ref{th:one},  for any $\alpha \leq 0.5$ and $\smin < \smax$, 
\begin{equation}\label{eq:p1.alpha}
p_1 (   \smax \Phi^{-1} (1-\alpha); \smin, \smax ) = \dfrac{2 \alpha}{1 + \smin/ \smax} > \alpha. 
\end{equation}
Note that, if $X_1, X_2, \dots$ are independent, the test is still asymptotically valid in the sense that the type I error rate is $\alpha$ as $n \rightarrow \infty$. By violating the independence assumption and carefully manipulating the sequence $\{\sigma_i\}_{i=1}^\infty$, the experimenter is able to increase the probability of falsely rejecting the null hypothesis by at least $2 \smax / (\smin + \smax)$. 
By~\eqref{eq:opt} and Theorem~\ref{th:one}, one can show that the asymptotically optimal strategy to attain the supremum in~\eqref{eq:test} is 
\begin{equation}\label{eq:opt1} 
\begin{aligned}
\sigma_i  =   \smax,   & \quad \quad \text{ if }   S_{i-1} / \sqrt{n} \leq \smax \Phi^{-1} (1 - \alpha),  \\
\sigma_i  =   \smin,   & \quad \quad \text{ if }   S_{i-1} / \sqrt{n} >  \smax \Phi^{-1}(1 - \alpha),  
\end{aligned}
\end{equation} 
where $S_i = X_1 + \cdots + X_i$. 

The two-sided test can be analyzed similarly. The probability of rejecting the null hypothesis  can be  bounded from below by $\pr ( \sqrt{n}  |\bar{X}_n | >  \smax \Phi^{-1} (1-\alpha/2)  )$. By Corollary~\ref{cor:two}, if $\smin < \smax$ and $\alpha$ is small, 
\begin{align*}
p_2 (   \smax \Phi^{-1} (1-\alpha/2); \smin, \smax ) \approx 
2 p_1 ( \smax \Phi^{-1} (1-\alpha/2); \smin, \smax  ) = \dfrac{2 \alpha}{1 + \smin/ \smax} > \alpha. 
\end{align*}
The  control policy given in~\eqref{eq:opt} can still be expressed in the form like~\eqref{eq:opt1}; however, $S_i$ is replaced by $|S_i|$ and the threshold changes with $i$. 
Numerically we can compute the threshold for each $i$ by solving $w_{xx} = 0$ where $w$ is as in Theorem~\ref{th:two}.  We observe that,  for small $\alpha$,  the threshold goes to $\Phi^{-1}(1 - \alpha / 2)$ very quickly as $i$ increases and thus can be treated just as a constant. 

Nevertheless, for a finite sample size, we need to take into account the distribution of the sample variance $s^2_n$, and the Z-test should be replaced by the t-test.  More importantly, in practice 
the parameters $\smin$ and $\smax$ are usually unknown.  Inspired by our theoretical results, we propose the following heuristic control policy for a two-sided test, 
\begin{equation}\label{eq:opt2} 
\begin{aligned}
\sigma_i  =   \smax,   & \quad \quad \text{ if }   |S_{i-1}| /  \sqrt{ n s_{i-1}^2}  \leq   c_\alpha  ,  \\
\sigma_i  =   \smin,   & \quad \quad \text{ if }   |S_{i-1}| /   \sqrt{ n s_{i-1}^2}  >   c_\alpha   . 
\end{aligned}
\end{equation} 
where $c_\alpha$ is the critical value  and $s_i^2$ is the sample variance of $X_1, \dots, X_i$.

We end our paper with a realistic simulation study.  
Consider a two-sided t-test with $\alpha = 0.05$. 
We generate normal samples $X_1, \dots, X_n$ (with mean zero and variances $\sigma_1^2, \dots, \sigma_n^2$) using~\eqref{eq:opt2} with $\smax = 1$ and $\smin = 0.8$. 
Then we compute the t-statistic by~\eqref{eq:def.T} (assuming $\smin, \smax$ are unknown) and perform a two-sided t-test with degree of freedom equal to $n - 1$.  We repeat this experiment for $1$ million times. For $n = 20$, the null hypothesis $\E(X_i) = 0$ is rejected in $5.65$ percent of all the experiments; for $n = 200$, the null is rejected in $5.89$ percent of all the experiments.  
To further illustrate how the manipulation of the variances of $X_1, \dots, X_n$ affects the sampling distribution of the t-statistic, we compare the empirical distribution of $T_n(X)$ with t-distribution in Figure~\ref{fig:tcdf} for $n = 200$. It can be seen that the empirical distribution of $T_n(X)$ almost coincides with the theoretical t-distribution, except near the critical values (approximately $\pm 2$). 

In the simulation we use  $\smax = 1$ and $\smin = 0.8$ to reflect that in reality the possible influence from the experimenter is limited. Because $\smin, \smax$ are close, every simulated set of observations $\{X_i\}_{i=1}^n$   looks just like a homoscedastic normal sample.  Further, without prior knowledge, the dependence structure is almost impossible to detect. But compared to the nominal significance level $\alpha = 0.05$,  the type I error rate is inflated by $13$\% for $n=20$ and $18$\% for $n = 200$.

\begin{figure}
\begin{center}
\includegraphics[width=0.9\linewidth]{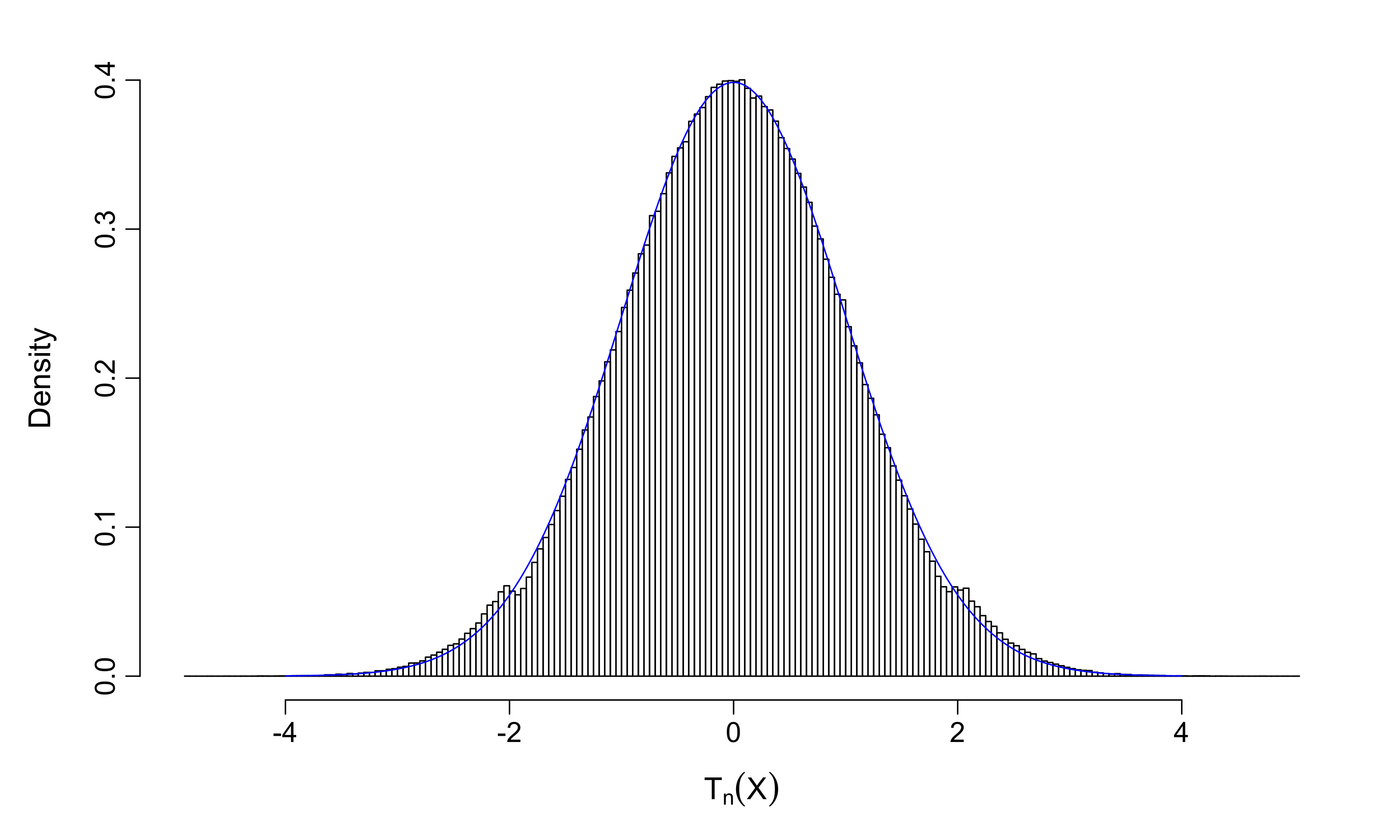}
\caption{ Histogram of the empirical distribution of the t-statistic $T_n(X)$ for $n=200$.  The normal observations $X_1, \dots, X_n$ are generated sequentially using~\eqref{eq:opt2}. The blue curve represents the density function of the t-distribution with $n-1$ degrees of freedom. For $n=200$ and $\alpha = 0.05$, the critical values are $\pm 1.97$. }\label{fig:tcdf}
\end{center}
\end{figure}

\setcounter{equation}{0}
\renewcommand{\theequation}{A\arabic{equation}}
\section*{Appendix A}
The asymptotically optimal policy $\{\sigma_i\}_{i=1}^\infty$ that attains the supremums in~\eqref{eq:tailc} can be derived using the diffusion limit of Problem~\ref{prob:control}. 
Let $W_t$ be a Wiener process and $Y_t$ be a controlled process evolving by  $d Y_t =   \sigma_t  d W_t,$
where $\sigma_t  $ is a process progressively measurable  with respect to the natural filtration generated by $W_t$. 
Let $S_i = X_1 + \cdots + X_i$ where $\{X_i\}_{i=1}^\infty$ are as defined in Problem~\ref{prob:control}. Intuitively speaking, as $n \rightarrow \infty$, $S_i / \sqrt{n}$ has the same distribution as $Y_{i/n}$, and in particular,  $\sqrt{n} \bar{X}_n$ has the same distribution as $Y_1$ (recall Donsker's Theorem.)

Consider the value function, 
\begin{equation*}\label{eq:val}
V  (t, y; \varphi) = \sup_{\sigma_t \in  [\smin, \smax] } \E [  \varphi(Y_1)  \mid Y_{t} = y ],  \quad \quad (t , y) \in [0, 1] \times \bbR, 
\end{equation*}
where the supremum is taken over all the progressively measurable processes  that take value in $[\smin, \smax]$ on the time interval $  [0, 1]$.  In our case,  $\varphi$ is given by $\varphi(y) = \ind{y > c}$ or $\varphi(y) = \ind{|y| > c}$. 
It is not difficult to see that the optimal control $\sigma_t$ must be a measurable function of $(t, Y_t)$ by a Markovian argument.  Then we may guess the solution by solving the Hamilton-Jacobi-Bellman  (HJB) equation, 
\begin{equation}\label{eq:hjb}
\sup\limits_{\sigma_t \in [\smin, \smax]}  \left\{  \dfrac{\partial V}{\partial t} + \dfrac{\sigma^2_t}{2} \dfrac{\partial^2 V}{\partial y^2} \right\} = 0,  \quad \quad V(0,  y) = \varphi(y), 
\end{equation}
and then prove it using the so-called verification techniques~\citep[for more details see, for example,][]{yong1999stochastic}. As expected, the HJB approach yields the same result as Theorem~\ref{th:clt}, and indeed, the solution to~\eqref{eq:hjb} is given by 
\begin{equation}\label{eq:vu}
V(t, y; \varphi) = u (1 - t, y; \varphi), 
\end{equation}
where $u$ is the solution to the G-heat equation~\eqref{eq:def.G-heat}. 

The optimal control process $\sigma_t$ is the one which attains the supremum in~\eqref{eq:hjb}; that is, 
\begin{equation*}
\begin{aligned}
\sigma_t =   \smax,   & \quad \quad \text{ if }   V_{yy} (t, Y_t) \geq 0,  \\
\sigma_t =   \smin,   & \quad \quad \text{ if }   V_{yy} (t, Y_t) < 0. 
\end{aligned}
\end{equation*} 
Using~\eqref{eq:vu}, the corresponding discrete-time approximation is given by 
\begin{equation}\label{eq:opt}
\begin{aligned}
\sigma_i =   \smax,   & \quad \quad \text{ if }   u_{xx} ( 1-(i-1)/n, \, S_{i-1}/\sqrt{n} ) \geq 0,  \\
\sigma_i =   \smin,   & \quad \quad \text{ if }   u_{xx} ( 1-(i-1)/n, \, S_{i-1}/\sqrt{n} ) < 0,  
\end{aligned}
\end{equation} 
for $i = 1, \dots, n$. 

\setcounter{equation}{0}
\renewcommand{\theequation}{B\arabic{equation}}
\section*{Appendix B}

Here we provide a brief presentation of Peng's central limit theorem~\citep{peng2008new, peng2019law} and show that it immediately implies Theorem~\ref{th:clt}. 
We refer the readers to~\cite{peng2010nonlinear} for further details. 
Let $\cH$ be a linear space of real-valued functions defined on a set $\Omega$ such that if $X_1, \dots,  X_n \in \cH$, we have $\varphi(X_1, \dots, X_n) \in \cH$ for any Lipschitz function $\varphi$. Given a collection of probability measures $\cP$,   we can define a sublinear expectation (of some random variable $X$), denoted by $\bbE$, by 
\begin{equation*} 
\bbE [ X ] \define   \sup\limits_{\pr  \in \cP } \E_{\pr} [ X ] =  \sup\limits_{\pr  \in \cP } \int_\Omega X  \, d \pr. 
\end{equation*}
One can check that, for $X, Y \in \cH$ with $\bbE\, |X| , \bbE\, |Y| < \infty$,  we have (i) $\bbE[X] \geq \bbE [Y]$ if $X \geq Y$; (ii) 
$\bbE[X] - \bbE[ - Y] \leq \bbE[X + Y] \leq \bbE[X] + \bbE[Y]$; (iii) $\bbE[ \lambda X] = \lambda \bbE[X]$ for $\lambda \geq 0$;
(iv) $\bbE[X+ c] = \bbE[X] + c$ for $c \in \bbR$. 

In the sublinear expectation theory, two random variables, $X, Y \in \cH$, are called identically distributed (under $\bbE$) iff $\bbE[\varphi(X) ] = \bbE[ \varphi(Y) ]$ for every Lipschitz function $\varphi$. For  $X_1, \dots, X_n \in \cH$, $X_n$ is said to be independent from $Y = (X_1, \dots, X_{n - 1})$ (under $\bbE$) iff  $\bbE[\varphi(X_n, Y)] = \bbE[  \bbE[  \varphi(X_n, y) ]_{y = Y}  ]$ for every Lipschitz $\varphi$. 
We say $X_1, X_2, \dots, $ are i.i.d. if they are identically distributed and $X_n$ is independent from $X_1, \dots, X_{n-1}$ for each $n$. 
 
\begin{theorem}[Peng's central limit theorem]
Let $X_1, X_2, \dots$ be an i.i.d. sequence of random variables under sublinear expectation $\bbE$. 
If $\bbE\, |X_1|^3  < \infty$, $\bbE[X_1] = \bbE[ - X_1] = 0$, $\bbE[ X_1^2 ] = \smax^2$ and $- \bbE[ - X_1^2] = \smin^2$ for some $0 < \smin \leq \smax < \infty$, 
then $\{X_i\}_{i=1}^\infty$ converges ``in distribution" to the G-normal distribution; that is, for any Lipschitz function $\varphi$, 
\begin{align*}
\lim\limits_{n \rightarrow \infty}  \bbE \left[  \varphi(  \sqrt{n} \, \bar{X}_n  )  \right] = \bbE[ \varphi(Z) ], 
\end{align*}
where $Z$ is G-normally distributed with lower variance $\smin^2$ and upper variance $\smax^2$. 
\end{theorem}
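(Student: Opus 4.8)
The plan is to recast the claim via Definition~\ref{def:gnormal}: writing $G(a)=\tfrac12(\smax^2a^+-\smin^2a^-)$ and letting $u(\cdot\,;\varphi)$ be the solution of the G-heat equation $u_t=G(u_{xx})$, $u(0,\cdot)=\varphi$ (see~\eqref{eq:def.G-heat}), we have $\bbE[\varphi(Z)]=u(1,0;\varphi)$, so it suffices to prove $\lim_{n\to\infty}\bbE[\varphi(\sqrt n\,\bar X_n)]=u(1,0;\varphi)$. The analytic input I would rely on — available because $\smin>0$ makes the equation uniformly parabolic — is that for Lipschitz $\varphi$ the solution is unique, $\|u_x(t,\cdot)\|_\infty$ is bounded by the Lipschitz constant of $\varphi$ for every $t\ge0$, $u$ is continuous in $t$ uniformly in $x$, and $u\in C^\infty$ on $(0,\infty)\times\bbR$ with $\|\partial_x^ku\|_{L^\infty([\delta,1]\times\bbR)}$ and $\|\partial_t^2u\|_{L^\infty([\delta,1]\times\bbR)}$ finite for every $\delta>0$ and $k\ge1$; these come from the standard interior estimates for uniformly parabolic equations and may be quoted from~\citet{peng2010nonlinear}. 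A first reduction lets me assume $\varphi$ smooth with bounded first three derivatives: replace $\varphi$ by $\varphi_\delta:=u(\delta,\cdot\,;\varphi)$, note that $u(\cdot\,;\varphi_\delta)=u(\cdot+\delta\,;\varphi)$ has bounded $x$-derivatives on all of $[0,1]\times\bbR$, and use $\|\varphi_\delta-\varphi\|_\infty\to0$, $u(1,0;\varphi_\delta)\to u(1,0;\varphi)$ together with the sub-additivity estimate $|\bbE[X]-\bbE[Y]|\le\|X-Y\|_\infty$ (from properties (i), (ii), (iv)) to pass to the limit $\delta\to0$ at the end.

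Next I would compare the random walk with the PDE one step at a time. With $S_k=X_1+\dots+X_k$, set $J_k=\bbE[\,u(1-\tfrac kn,\tfrac{S_k}{\sqrt n})\,]$ for $k=0,\dots,n$, so that $J_0=u(1,0)=\bbE[\varphi(Z)]$ and $J_n=\bbE[\varphi(\sqrt n\,\bar X_n)]$, and the goal becomes $J_n-J_0\to0$. Since $X_k$ is independent of $(X_1,\dots,X_{k-1})$ and distributed as $X_1$, the independence and identical-distribution axioms give $J_k=\bbE[\,g_k(\tfrac{S_{k-1}}{\sqrt n})\,]$, where $g_k(x)=\bbE[\,u(1-\tfrac kn,\,x+\tfrac{X_1}{\sqrt n})\,]$ (the map $(x,s)\mapsto u(1-\tfrac kn,(s+x)/\sqrt n)$ being Lipschitz, it is a legitimate test function). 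The crux is the uniform-in-$x$ estimate $g_k(x)=u(1-\tfrac{k-1}{n},x)+O(n^{-3/2})$. To obtain it, Taylor-expand $u(1-\tfrac kn,\cdot)$ to third order about $x$: then $g_k(x)$ equals $\bbE$ of $u(1-\tfrac kn,x)$ plus a linear-in-$X_1$ term with deterministic coefficient, plus $\tfrac1{2n}u_{xx}(1-\tfrac kn,x)\,X_1^2$, plus a remainder bounded by $\mathrm{const}\cdot|X_1|^3n^{-3/2}$. I would then invoke the elementary identity $\bbE[A+B]=\bbE[B]$ whenever $\bbE[A]=\bbE[-A]=0$ (immediate from property (ii)), with $A$ the linear term — which has vanishing sublinear mean because $\bbE[X_1]=\bbE[-X_1]=0$ — together with $\bbE[\tfrac c2X_1^2]=G(c)$ for $c\in\bbR$ (from $\bbE[X_1^2]=\smax^2$ and $-\bbE[-X_1^2]=\smin^2$) and $|\bbE[U]-\bbE[V]|\le\bbE[|U-V|]$; this yields $g_k(x)=u(1-\tfrac kn,x)+\tfrac1nG(u_{xx}(1-\tfrac kn,x))+O(n^{-3/2})$. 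Because $u$ solves the G-heat equation, $G(u_{xx})=u_t$, and a first-order Taylor expansion in $t$ (controlled by $\|u_{tt}\|_\infty$) turns this into the claimed estimate. Feeding it into $J_k=\bbE[g_k(\tfrac{S_{k-1}}{\sqrt n})]$ and using monotonicity and translation invariance of $\bbE$ gives $|J_k-J_{k-1}|\le Cn^{-3/2}$ with $C$ depending only on $\varphi,\smin,\smax$ and $\bbE[|X_1|^3]$; summing over $k=1,\dots,n$ gives $|J_n-J_0|\le Cn^{-1/2}\to0$, which is the theorem.

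I expect the main obstacle to be the analytic half: establishing the global Lipschitz bound, the interior smoothness, and the derivative bounds on $[\delta,1]\times\bbR$ for a fully nonlinear equation whose nonlinearity $G$ is only Lipschitz, and confirming that the viscosity solution used here is exactly the object defining $\bbE[\varphi(Z)]$ in Definition~\ref{def:gnormal}. With $\smin>0$ these are furnished by uniformly-parabolic theory and can be cited from~\citet{peng2010nonlinear, peng2019law}; the degenerate case $\smin=0$, which lies outside the present hypotheses, would require a separate treatment (for instance, stability as $\smin\downarrow0$, or a direct stochastic-control representation). By contrast the probabilistic bookkeeping — the telescoping, the independence-based factorization, and the three-term Taylor estimate combined with the identity $\bbE[A+B]=\bbE[B]$ — is essentially routine once the regularity of $u$ is in hand.
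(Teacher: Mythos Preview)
The paper does not supply a proof of this theorem: in Appendix~B, Peng's central limit theorem is stated as a known result with citations to \citet{peng2008new, peng2019law}, and the subsequent paragraphs are devoted not to proving it but to showing how Theorem~\ref{th:clt} follows from it by constructing an appropriate sublinear expectation space. There is therefore no proof in the paper to compare your argument against.

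That said, your sketch is essentially the standard proof of Peng's theorem as found in \citet{peng2010nonlinear, peng2019law}: the Lindeberg-type telescoping $J_k=\bbE[\,u(1-k/n,\,S_k/\sqrt n)\,]$, the factorization $J_k=\bbE[\,g_k(S_{k-1}/\sqrt n)\,]$ via the independence axiom, the third-order Taylor expansion combined with $\bbE[cX_1]=\bbE[-cX_1]=0$ and $\bbE[\tfrac{c}{2}X_1^2]=G(c)$, and the regularization $\varphi\mapsto\varphi_\delta=u(\delta,\cdot\,;\varphi)$ to secure uniform derivative bounds on $[0,1]\times\bbR$. The probabilistic bookkeeping is correct. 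The one soft spot is your appeal to $\|u_{tt}\|_\infty$: since $G$ is only Lipschitz, $u_{tt}$ need not exist classically where $u_{xx}=0$, so the second-order time remainder should instead be controlled through the Lipschitz continuity of $t\mapsto u_t(t,x)=G(u_{xx}(t,x))$, which follows from a bound on $\partial_t u_{xx}$ obtained by differentiating the equation in $x$ (available from the same interior estimates you already invoke). With that minor adjustment the argument goes through.
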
  

As mentioned in  Remark~\ref{remark:sublinear}, $\bbE[\varphi(Z)]$ can be computed by solving the corresponding G-heat equation. 
To see that the above result immediately implies Theorem~\ref{th:clt}, we only need to find an appropriate sublinear expectation space and check the conditions. 

To this end, let $\Omega \define \left\{ \omega=(\omega_{i})_{i=1}^{\infty},\omega_{i}\in\mathbb{R}\right\} $, and 
$$\cH \define  \left\{ \xi(\omega)=\varphi(\omega_{1},\cdots,\omega_{n}),\;\omega\in\Omega, \,  n\in\mathbb{N},\varphi\in C_{Lip}(\mathbb{R}^{n})\right\}.$$ 
Define $X_i(\omega) \define \omega_i$ for $i = 1, 2, \dots$, which we call the canonical processes of $(\Omega, \cH)$. Clearly $X_i(\omega) \in \cH$.  
For any random variable $\xi(\omega) = \varphi(X_1, \dots, X_n)$ where $\varphi \in C_{Lip}(\mathbb{R}^{n})$ and for any $\psi \in C_{Lip}(\mathbb{R})$, we define a sublinear expectation $\bbE$ by 
\begin{equation}\label{eq:def.sub.peng}
\bbE [ \psi(\xi(\omega) ) ] = \sup_{ \{\sigma_i\} \in \Sigma(\smin, \smax) } \E_{\pr} [  \psi( \varphi(\sigma_1 \epsilon_1, \dots, \sigma_n \epsilon_n ) )], 
\end{equation}
where $\epsilon_1, \dots, \epsilon_n$ and $\Sigma(\smin, \smax)$ are as given in Problem~\ref{prob:control} (and $X_i = \sigma_i \epsilon_i$.) It is clear that 
$\bbE[ \psi(X_1)] = \bbE[ \psi(X_i)] = \sup_{\sigma_i \in [\smin, \smax]} \E  [ \psi(\sigma_i \epsilon_i )]$ for any Lipschitz $\psi$. 
Further,  one can show that $X_n$ is independent of $X_1, \dots, X_{n-1}$ under $\bbE$ using the definition of $\Sigma(\smin, \smax)$. Finally, all the moment conditions are satisfied by the properties of classical normal distribution. 
Applying Peng's central limit theorem with~\eqref{eq:def.sub.peng}, we obtain Theorem~\ref{th:clt}.

\section*{Acknowledgements}
We thank the anonymous reviewers for their helpful comments. 

{\footnotesize
\setlength{\bibsep}{0.2pt}
\bibliographystyle{plainnat}
\bibliography{ref}
 }

\end{document}